\def\A{{\mathcal A}}
\def\th{\theta}
\def\Th{\Theta}
\def\N{\mathbb{N}}
\def\AA{\mathfrak A}
\def\H{\mathcal H}
\def\K{\mathcal K}
\def\p{\parallel}
\def\S{\mathcal S}
\def\Ai{\mathcal A^\infty}
\def\AAi{\mathfrak A^\infty}
\def\si{\sigma}
\def\Si{\Sigma}
\def\<{\langle}
\def\>{\rangle}
\providecommand{\CC}{\mathfrak{C}}
\def\N{\mathbb{N}}
\def\[{\left[}
\def\]{\right]}
\def\<{\left<}
\def\>{\right>}
\def\({\left(}
\def\){\right)}
\def\H{\mathcal H}
\def\K{\mathcal K}
\def\S{\mathscr S}
\def\CC{{\mathfrak C}}
\def\<{\langle}
\def\>{\rangle}
\providecommand{\CC}{\mathfrak{C}}
\newtheorem{theorem}{Theorem}[section] 
\newtheorem{corollary}[theorem]{Corollary}
\newtheorem{proposition}[theorem]{Proposition}
\newtheorem{remark}[theorem]{Remark}
\newtheorem{example}[theorem]{Example}
\newtheorem{definition}[theorem]{Definition}
\begin{document}

\title{Modulation Spaces and Representations\\ for Rieffel's Quantization}

\author{Marius M\u antoiu  \footnote{
\textbf{2010 Mathematics Subject Classification: Primary 35S05, 46L65, Secundary 46L55.}
\newline
\textbf{Key Words:}  Pseudodifferential operator, Rieffel deformation, $C^*$-algebra, crossed product, modulation space, noncommutative dynamical system.}}

\date{\small}
\maketitle\vspace{-1cm}

\bigskip
\medskip
Departamento de Matem\'aticas, Universidad de Chile,

Las Palmeras 3425, Casilla 653, Santiago, Chile

\emph{E-mail:} mantoiu@uchile.cl

\bigskip

\begin{abstract}
We define localized modulation maps and modulation spaces of symbols suited to the study of Rieffel's deformation  quantization pseudodifferential calculus. They are used to generate Hilbert space representations for the quantized $C^*$-algebras, starting from covariant representations of the corresponding twisted $C^*$-dynamical system. In the case of an Abelian undeformed algebra, orthogonal relations and extra information about the representations are obtained.
\end{abstract}

\section*{Introduction}\label{duci}

In a famous paper \cite{Rie1}, Mark Rieffel introduced a general deformation quantization procedure starting with the action $\Th$ of a symplectic space $\Xi$ on a $C^*$-algebra $\A$\,, commutative or not. The outcome is another $C^*$-algebra $\AA\,,$ also endowed with an action of the symplectic space. The construction is functorial, has many applications and plays an important role both in $C^*$-algebra and in pseudodifferential theory, being a generalization of the standard Weyl calculus.

\smallskip
The symplectic form allows defining a $2$-cocycle $\kappa$ on $\Xi$\,. The same data $(\A,\Th,\Xi,\kappa)$ also permit defining a twisted crossed product $C^*$-algebra $\A\!\rtimes_\Th^\kappa\!\Xi$\,, as in \cite{PR1,PR2}. In \cite{BM} we made the connection between the two constructions, putting in evidence an isomorphism $M:\mathfrak K\otimes\AA\rightarrow\A\!\rtimes_\Th^\kappa\!\Xi$\,, where $\mathfrak K$ is an elementary $C^*$-algebra containing densely the Schwartz space $\S(\Xi)$\,. This lead to several applications. Starting from Kasprzak's approach to Rieffel's quantization \cite{Kp}, Neshveyev exhibited in \cite{Ne} a similar connection in a more general setting. In the present article, we intend to further use the mentioned isomorphism to define suitable spaces of functions for Rieffel's calculus and to explore some of its representations in Hilbert spaces.

\smallskip
If the initial $C^*$-algebra $\A$ is Abelian with Gelfand spectrum $\Sigma$\,, one can view Rieffel's formalism as a generalized version of the Weyl calculus associated to the topological dynamical system $(\Sigma,\Theta,\Xi)$ and the elements of $\A$ as generalized pseudodifferential symbols. The standard form is recovered essentially when $\Sigma=\Xi$ and $\Theta$ is the action of the vector group $\Xi$ on itself by translations. So, aside applications in Deformation Quantization and Noncommutative Geometry, one might want to use Rieffel's calculus for purposes closer to the traditional theory of pseudodifferential operators. In \cite{Ma,Ma1}, relying on the strong functorial connections between "the classical data" $(\Sigma, \Theta, \Xi)$ and the quantized algebra $\AA$\,, we used the formalism to solve several problems in spectral theory. Other potential applications are in view; their success partly depends of our ability to supply families of function spaces suited to the calculus. Since H\"ormader-type symbol spaces seem to be rather difficult to define and use, we turned our attention to the problem of adapting modulation spaces to this general context.

\smallskip
Modulation spaces and more general coorbit spaces are Banach function spaces introduced by H. Feichtinger and C. Gr\"ochenig \cite{Fe,Fe1,FG} and already useful in many fields of pure and applied mathematics. They are defined by imposing suitable norm-estimates on a certain type of transformations of the function one studies. In the standard case, these transformations involve a combination of translations and multiplications with phase factors. The literature on this topic is too vast to be reviewed here.

\smallskip
After J. Sj\"ostrand rediscovered one of these spaces in the framework of pseudodifferential operators \cite{Sj,Sj1}, the interconnection between modulation spaces and pseudodifferential theory developed considerably, as in \cite{Bu,Gr,Gr1,Gr2,GH,GR,HRT,LdG,T,To} and references therein. The modulation strategy supplies both valuable symbol spaces used for defining the pseudodifferential operators and good function spaces on which these operators apply. From several points of view, the emerging theory is simpler and sharper than that relying on "traditional function spaces". Extensions to pseudodifferential operators constructed on locally compact Abelian groups are available \cite{GS}. In \cite{MP} the case of the magnetic Weyl calculus is considered in a modulation setting, while in \cite{BB,BB4} modulation spaces are defined and studied for the magnetic Weyl calculus defined by representations of nilpotent Lie groups. In some recent publications, as \cite{FR,Ma2,MP1,RU} for instance, the theory is developed withour referring to group theory.

\smallskip
In this article we start the project of defining and using modulation spaces and Hilbert space representations adapted to Rieffel's quantization. Only general constructions will be presented here, making efforts not to exclude the case of a non-commutative "classical" algebra $\A$. Extensions, more examples and a detailed study of the emerging spaces will be presented elsewhere.

\smallskip
Sections 1 and 2 contain the basic constructions. Very roughly, the modulation strategy starts by defining linear injective maps (called modulation maps) from the smooth algebra $\AA^\infty$ to the twisted crossed product $\A\!\rtimes_\Theta^\kappa\!\Xi$\,, indexed by "windows" belonging to Schwartz space $\S(\Xi)$\,. We insist that, for self-adjoint idempotent windows, these maps should be morphisms of $^*$-algebras, at the price of deviating to a certain extent from the previous definitions, given for the usual Weyl calculus. Actually, all these morphisms are all obtained by suitably "localizing" a single isomorphism \cite{BM} sending the $^*$-algebra $\S(\Xi;\AA^\infty)$ (with the Rieffel-type structure for the doubled classical data) to $\S(\Xi;\A^\infty)$ seen as a $^*$-subalgebra of $\A\rtimes_\Theta^\kappa\Xi$\,. In addition, they extend to embeddings of $\AA$ in the twisted crossed product $\A\rtimes_\Theta^\kappa\Xi$\,. We use these modulation maps to induce norms on $\AA^\infty$ from norms defined on $\S(\Xi;\A^\infty)$\,. As a reward for our care to preserve algebraic structure, one gets in this way Banach algebra norms from Banach algebra norms, $C^*$-norms from $C^*$-norms, etc. In particular, Rieffel's algebra $\AA$ is presented as the modulation space induced from $\A\rtimes_\Theta^\kappa\Xi$\,. We also address the problem of independence of the resulting Banach spaces under the choice of the window.

\smallskip
Then, in Section 3, we turn to Hilbert space representations. By using localization with respect to idempotent windows, the $^*$-representations of the twisted crossed product $\A\!\rtimes_\Theta^\kappa\!\Xi$ (indexed by covariant representations of the twisted $C^*$-dynamical system $(\A,\Theta,\Xi,\kappa)$) automatically supply $^*$-representations of the smooth algebra $(\AA^\infty,\#)$\,, which extend to full $C^*$-representations of $(\AA,\#)$\,. This also allows one to express the norm in $\AA$ (initially defined by Hilbert module techniques) in a purely Hilbert space language.

\smallskip
In the Abelian case we have previously listed families of Schr\"odinger-type representations in the Hilbert space $L^2(\mathbb R^n)$ defined by orbits of the topological dynamical system $(\Sigma, \Theta, \Xi)$\,. They were used in [19] in the spectral analysis of Quantum Hamiltonians. We are going to show in Section 4 that their Bargmann transforms can be obtained from some canonical representations of the twisted crossed product applied to symbols defined by the modulation maps. We also prove orthogonality relations, relying on a choice of an invariant measure on $\Sigma$\,.

\section{Rieffel quantization and its connection with twisted crossed products}\label{sectra}

We start with a quadruplet $\left(\A,\Th,\Xi,[\![\cdot,\cdot,]\!]\right)$ formed of a (finite dimensional real) symplectic space $(\Xi,[\![\cdot,\cdot,]\!])$ and a strongly continuous action $\Th$ of $\Xi$ by automorphisms of a $C^*$-algebra $\A$\,. The dense $^*$-subalgebra of $\Th$-smooth vectors 
$$
\Ai:=\{f\in\A\mid\Xi\ni X\mapsto\Th_X(f)\in\mathcal A\ {\rm is}\ C^\infty\}
$$ 
is also a Fr\'echet algebra with the family of semi-norms
\begin{equation}\label{semicar}
|f|_\A^k:=\sum_{|\alpha|=k}\frac{1}{\alpha!}\parallel\!\partial_X^\alpha\left[\Th_X(f)\right]_{X=0}\!\parallel_\A,\quad k\in\N\,.
\end{equation}
In \cite{Rie1,Rie2}, Marc Rieffel introduced on $\Ai$ the product
\begin{equation}\label{rodact}
f\,\#\,g:=2^{2n}\!\int_\Xi\int_\Xi e^{2i[\![Y,Z]\!]}\,\Th_Y(f)\,\Th_Z(g)dYdZ,
\end{equation}
defined as an oscillatory integral. Completing the $^*$-algebra $(\Ai,\#\,,^*)$ in a suitable $C^*$-norm $\parallel\cdot\parallel_\AA$\,, one gets a $C^*$-algebra $\AA$\,, called {\it the R-quantization of $\A$}\,. The action $\Th$ extends to a strongly continuous action on $\AA$\,, for which one keeps the same notation. The space $\AAi$ of $C^\infty$-vectors coincide with $\Ai$. 

\smallskip
The initial $C^*$-algebra $\A$ could be non-commutative, or, if it is commutative, it could consist of functions on some locally compact space $\Si$ on which $\Xi$ acts, but which is very different from it. However, the following two examples will play a special role.

\begin{example}\label{beor}
{\rm First one sets $\A:=BC_{{\rm u}}(\Xi)$\,, the $C^*$-algebra of bounded uniformly continuous functions on $\Xi$\,, which is invariant under the translations $\left[\mathcal T_X({\sf f})\right](\cdot):={\sf f}(\cdot-X)$\,. Then the $^*$-algebra of smooth vectors is $BC^\infty(\Xi)$\,,
the space of all smooth complex functions on $\Xi$ with bounded derivatives of every order. In this case, Rieffel's construction (with $\Th=\mathcal T$) is basically the standard Weyl calculus; we are going to use the special notations $\sharp$ (instead of $\#$) for the corresponding composition law and $\mathfrak B(\Xi)$ for the R-quantization of $BC_{{\rm u}}(\Xi)$.}
\end{example}

\begin{example}\label{huor}
{\rm Another option is $\A:=C_0(\Xi)$\,, the $C^*$-algebra of all the complex continuous functions on $\Xi$ that decay at infinity. Its Rieffel quantization will be denoted by $\mathfrak K (\Xi)$\,; it contains the Schwartz space $\S(\Xi)$ densely. It is known to be elementary, i.e isomorphic to the $C^*$-algebra of all compact operators in a separable Hilbert space.}
\end{example}

Following \cite{Rie1}, one introduces the Fr\'echet space $\S(\Xi;\AAi)$ composed of smooth functions $F:\Xi\rightarrow\Ai=\AAi$ with derivatives that decay rapidly with respect to all the seminorms \eqref{semicar}. We are going to use the identification of $\S\left(\Xi;\AAi\right)$ with the topological tensor product $\S(\Xi)\hat\otimes\,\AAi$ (the Fr\'echet space $\S(\Xi)$ is nuclear). On it one defines the action $\mathcal T\otimes\Th$ of the vector space $\Xi\times\Xi$ given by 
$$
\[\(\mathcal T_A\otimes\Th_Y\!\)F\]\!(X):=\Th_Y\!\[F(X-A)\],
$$ 
and the composition law (an oscillatory integral, once again in the spirit of Rieffel quantization)
\begin{equation}\label{argrur}
\(F_1\square F_2\)\!(X\!)=2^{4n}\!\!\int_\Xi\!\int_\Xi\!\int_\Xi\!\int_\Xi e^{-2i[\![A,B]\!]}\,e^{2i[\![Y,Z]\!]}
\[\left(\mathcal T_A\otimes\Th_Y\right)\!F_1\]\!(X)\[\left(\mathcal T_B\otimes\Th_Z\right)\!F_2\]\!(X)dAdBdYdZ.
\end{equation}
Supplying the involution $F^\square(\cdot):=F(\cdot)^*$, one gets a Fr\'echet $^*$-algebra.

\begin{remark}\label{uhu}
{\rm For elements $f,g\in\AAi$, ${\sf h},{\sf k}\in\S(\Xi)$ one has 
$$
({\sf h}\otimes f)\,\square\,({\sf k}\otimes g)=({\sf k}\,\sharp\,{\sf h})\otimes(f\#g)\,,
$$ 
so $\square$ is the tensor product between $\#$ and the law opposite to $\sharp$. By \cite[Prop.\,2.1]{Rie2}, one can identify $\mathfrak K(\Xi)\otimes\AA$ with the
R-deformation of $C_0(\Xi)\otimes\A\equiv C_0(\Xi;\A)$ and $\mathfrak B(\Xi)\otimes\AA$ with the R-deformation of $BC_{{\rm u}}(\Xi)\otimes\A$\,.}
\end{remark}

\begin{remark}\label{ciuhat}
{\rm We recall that $\Ai=\AAi$, but the algebraic structures are different. When the forthcoming arguments will involve the composition $\#$\,, we will use the notation $\S\left(\Xi;\AAi\right)$\,. In other situations the notation $\S\left(\Xi;\Ai\right)$ will be more natural. }
\end{remark}

Starting from the same data $\left(\A,\Th,\Xi,[\![\cdot,\cdot,]\!]\right)$\,, one can construct \cite{PR1,PR2} {\it the twisted crossed product} $C^*$-algebra $\A\rtimes_\Th^\kappa\Xi$\,. Besides the action $\Th$\,, this makes use of the group $2$-cocycle attached to the symplectic form 
\begin{equation}\label{caf}
\kappa:\Xi\times\Xi\rightarrow\mathbb T\,, \ \ \ \ \ \kappa(X,Y):=\exp\left(-\frac{i}{2}\,[\![X,Y]\!]\right),
\end{equation}
and $\A\rtimes_\Th^\kappa\Xi$ is the enveloping $C^*$-algebra of the Banach $^*$-algebra $\(L^1(\Xi;\A),\diamond,^\diamond,\parallel\cdot\parallel_1\)$\,, where
\begin{equation*}\label{aca}
\parallel\!F\!\parallel_1:=\int_\Xi \parallel F(X)\parallel_\A\!dX\,,\ \ \ \ \ F^\diamond(X):=F\(-X\)^*
\end{equation*}
and (symetrized version of the usual form)
\begin{equation}\label{ucu}
(F_1\diamond F_2)(X):=\int_\Xi\kappa(X,Y)\,\Th_{(Y-X)/2}\[G_1(Y)\]\,\Th_{Y/2}\left[F_2(X-Y)\right] dY.
\end{equation}

On $\S\left(\Xi;\AAi\right)$ we introduce {\rm the canonical mapping}
\begin{equation}\label{somorfismv}
[M(F)](X):=\int_\Xi e^{-i[\![X,Y]\!]}\,\Th_Y\!\[F(Y)\]dY,
\end{equation}
that can also be written as $M=\mathfrak F\circ{\rm C}$\,, in terms of the transformation $\[{\rm C}(F)\]\!(X):=\Th_{X}\!\[F(X)\]$ and the (symplectic) partial Fourier transform
\begin{equation*}
\mathfrak F\equiv\mathcal F\otimes 1:\S(\Xi;\AAi)\rightarrow\S(\Xi;\Ai)\,,\ \ \ \ \ (\mathfrak F F)(X):=\int_\Xi  e^{-i[\![X,Y]\!]}F(Y)dY.
\end{equation*}

We recall that $\mathfrak K(\Xi)$\,, with multiplication $\sharp$\,, has been defined in Example \ref{huor} as the R-quantization of the Abelian $C^*$-algebra $C_0(\Xi)$ on which $\Xi$ acts by translations. 
We also recall Remark \ref{uhu}. The next theorem is the main result of \cite{BM}, where several applications were indicated:

\begin{theorem}\label{stric}
\begin{enumerate}
\item
The mapping $M:\left(\S\left(\Xi;\AAi\right),\square\,,\,^\square\,\right)\rightarrow\left(\S\left(\Xi;\Ai\right), \diamond\,,\,^\diamond\,\right)$
is an isomorphism of Fr\'echet $^*$-algebras and $M^{-1}$ is its inverse.
\item
The mapping $M$ extends to a $C^*$-isomorphism $:\mathfrak K(\Xi)\otimes\AA\rightarrow \A\rtimes_\Th^\kappa\Xi$\,.
\end{enumerate}
\end{theorem}

\section{Generalized modulation spaces}\label{glamoring}

We apply now {\it localization}; this means to consider $M(F)$ for decomposable functions 
$$
F(\cdot) = ({\sf h}\otimes f)(\cdot) := {\sf h}(\cdot)f,
$$ 
with ${\sf h}\in\S(\Xi)$ and $f\in\AA^\infty$ and then to freeze ${\sf h}$ (often called {\it the window}), using this to examine $f$. For ${\sf h}\in\S(\Xi)$ we define $J_{\sf h}:\AA^\infty\to\S(\Xi;\AA^\infty)$ and $\widetilde{J}_{\sf h}:\S(\Xi;\AA^\infty)\to\AA^\infty$ by
$$
J_{\sf h}(f):={\sf h}\otimes f\,,\quad\widetilde{J}_{\sf h}(F):=\int_\Xi\overline{{\sf h}(Y)}F(Y)dY.
$$

\begin{definition}\label{cedrac}
{\rm The localized modulation map defined by} ${\sf h}\in\S(\Xi)\!\setminus\!\{0\}$ is the linear injection
\begin{equation*}\label{cedraq}
M_{\sf h}:\AA^\infty\to\S(\Xi;\A^\infty)\,,\quad M_{\sf h}(f):=(M\circ J_{\sf h})(f)=M({\sf h}\otimes f)\,.
\end{equation*}
\end{definition}

Explicitly, we get
\begin{equation*}\label{kedraq}
\big[M_{\sf h}(f)\big](X)=\int_\Xi e^{-i[\![X,Y]\!]}{\sf h}(Y)\Th_Y(f)dY.
\end{equation*}
which can also be expressed in terms of symplectic Fourier transforms and convolution: 
$$
 M_{\sf h}(f)=\mathfrak F\big[{\sf h}\Th_f\big]=\widehat{\sf h}\ast\widehat{\Th_f}\,,
 $$ 
 where one uses the notation $\Th_f:\Xi\to\A\,,\,\Th_f(X):=\Th_X(f)$\,.

\smallskip
We also set $\widetilde{M_{\sf h}}:=\widetilde{J}_{\sf h}\circ M^{-1}$. In terms of the scalar product $\<\cdot,\cdot\>_\Xi$ of $L^2(\Xi)$ (anti-linear in the first variable), one obviously has 
$$
\widetilde{M_{\sf k}}M_{\sf h}f=\widetilde{J_{\sf k}}J_{\sf h}f=\<{\sf k},{\sf h}\>_{\Xi}\,f\,,
$$
a particular case of which can be regarded as {\it an inversion formula}:
\begin{equation}\label{invform}
f=\frac{1}{\p\!{\sf h}\!\p^2_\Xi}\widetilde{M_{\sf h}}M_{\sf h}f.
\end{equation}

The localized modulation maps can be extended to $C^*$-morphisms.

\begin{corollary}\label{aragorn}
If $\,{\sf h}\,\sharp\,{\sf h}={\sf h}=\overline{\sf h}\in\S(\Xi)\!\setminus\!\{0\}$\,, then $M_{\sf h}:\big(\AA^\infty,\#,^*\big)\to\big(\S(\Xi;\A^\infty),\diamond,^\diamond\big)$ is a $^*$-monomorphism. It extends to a $C^*$-algebraic monomorphism $M_{\sf h}:\AA\to\A\rtimes_\Th^\kappa\Xi$\,.
\end{corollary}

\begin{proof}
Notice that under the stated assumptions, $J_{\sf h}$ is a $^*$-morphism:
$$
J_{\sf h}(f\#g)={\sf h}\otimes(f\#g)=({\sf h}\,\sharp\,{\sf h})\otimes(f\#g)=({\sf h}\otimes f)\,\square\,({\sf h}\otimes g)=J_{\sf h}(f)\,\square\,J_{\sf h}(g)\,,
$$
$$
J_{\sf h}(f^*)={\sf h}\otimes f^*=\overline{{\sf h}}\otimes f^*=({\sf h}\otimes f)^\square=\big(J_{\sf h}(f)\big)^\square.
$$
It is obviously injective on $\AA^\infty$. This and the first part of Theorem \ref{stric} (or rather direct computations, as in the proof of \cite[Prop.\,4.2]{BM}) easily imply the first part of the Corollary. 

\smallskip
The second part, involving the $C^*$-norms, still needs some technical effort. For convenience we indicate a direct proof, not relying on the point 2 of Theorem \ref{stric}. It is easier (but it proves less) than the proof of  \cite[Th.\,5.1]{BM}.

\smallskip
Taking into account the fact that $\S(\Xi;\A^\infty)$ is a $^*$-subalgebra of $L^1(\Xi;\A)$\,, which is in its turn a $^*$-subalgebra of the $C^*$-algebra $\A\rtimes_\Th^\kappa\Xi$\,, we examine the injective $^*$-morphism $M_{\sf h}:\AA^\infty\to\A\rtimes_\Th^\kappa\Xi$\,. We claim that it is isometric when on $\AA^\infty$ one considers the norm $\p\!\cdot\!\p_{\AA}$\,; this would insure that it can be extended to an injective $^*$-morphism on $\AA$\,. By the paragraph 3.1.6 in \cite{Bl}, this follows if $\AA^\infty$ is invariant under the $C^\infty$ functional calculus of $\AA$\,. But this property is obtained by straightforward extensions of the results of Subsection 3.2.2 in \cite{BR}, writing $\AA^\infty$ as the intersection of domains of arbitrarily large products $\delta^\alpha\!:=\delta_1^{\alpha_1}\dots\delta_{2n}^{\alpha_{2n}}$ of the closed derivations 
$$
f\to\delta_j f:=\partial_{X_j}\!\big[\Th_X(f)\big]_{X=0}\in\AA\,,\quad  j=1,...,2n:=\dim(\Xi)
$$ 
associated to the $2n$-parameter group $\Th$ of $^*$-automorphisms of $\AA$\,.
\end{proof}

We use the mappings $M$ and $M_{\sf h}$ to pull back structure. This could involve various types of topological vector spaces, but we are going to restrict our interest to normed spaces. 

\smallskip
If $\,\p\!\cdot\!\p:\S(\Xi;\A^\infty)\to\mathbb R_+$ is a norm, we define a new one by
\begin{equation*}\label{arathorn}
\p\!\cdot\!\p^M\,:\S(\Xi;\AA^\infty)\to\mathbb R_+\,,\quad\p\!F\!\p^M:=\,\p\!M(F)\!\p.
\end{equation*}
Assume now that a function ${\sf h}\in\S(\Xi)\!\setminus\!\{0\}$ (a window) is given. We define the norm
\begin{equation*}\label{galadriel}
\p\!\cdot\!\p^M_{\sf h}\,:\AA^\infty\to\mathbb R_+\,,\quad\p\!f\!\p_{\sf h}^M:=\,\p\!M_{\sf h}(f)\!\p\,=\,\p\!M({\sf h}\otimes f)\!\p\,=\,\p\!J_{\sf h}(f)\!\p^M.
\end{equation*}

\begin{definition}\label{mithrandir}
If $\mathfrak L$ denotes the completion of $\big(\S(\Xi;\A^\infty), \p\!\cdot\!\p\!\big)$\,, let us define $\mathfrak L^M$ to be the completion of $\big(\S(\Xi;\AA^\infty),\p\!\cdot\!\p^M\!\big)$  and $\mathfrak L^M_{\sf h}$ the completion of $\big(\AA^\infty,\p\!\cdot\!\p^M_{\sf h}\!\big)$\,.

\smallskip
We call $\big(\mathfrak L^M_{\sf h},\p\!\cdot\!\p^M_{\sf h}\big)$ {\rm the generalized modulation space associated to the pair} $(\mathfrak L,{\sf  h})$\,.
\end{definition}

By definition, the normed spaces $\big(\S(\Xi;\AA^\infty),\p\!\cdot\!\p^M\!\!\big)$ and $\big(\S(\Xi;\A^\infty),\p\!\cdot\!\p\!\big)$ are isomorphic, while $J_{\sf h}$ is an isometric embedding of $\big(\AA^\infty,\p\!\cdot\!\p^M_{\sf h}\!\big)$ into $\big(\S(\Xi;\AA^\infty),\p\!\cdot\!\p^M\!\big)$ and $M_{\sf h}$ an isometric embedding of $\big(\AA^\infty,\p\!\cdot\!\p^M_{\sf h}\!\big)$ into $\big(\S(\Xi;\A^\infty),\p\!\cdot\!\p\!\big)$\,. By extension one gets mappings also denoted by $M:\mathfrak L^M\to\mathfrak L$ (an isomorphism) and $M_{\sf h} : \mathfrak L^M_{\sf h}\to\mathfrak L$ (an isometric embedding). Often a Banach space $(\mathfrak L,\p\!\cdot\!\p)$ containing densely $\S(\Xi;\A^\infty)$ is given and one applies the procedure above to induce a Banach space $\mathfrak L^M_{\sf h}$ containing $\AA^\infty$ densely. The denseness of $\S(\Xi;\A^\infty)$ could be avoided using extra techniques, but this will not be done here.

\smallskip
Concerning the compatibility of norms with $^*$-algebra structures we can say basically that, using a self- adjoint idempotent window, one induces Banach $^*$-algebras from Banach $^*$-algebras and $C^*$-algebras from $C^*$-algebras:

\begin{proposition}\label{gandalf}
Assume that ${\sf h}\ne 0$ is a self-adjoint projection in $(\S(\Xi),\sharp)$\,, i.e. ${\sf h}\,\sharp\,{\sf h}={\sf h}=\overline{\sf h}$\,.

\begin{enumerate}
\item
If the involution $^\diamond$ in $\big(\S(\Xi;\A^\infty),\p\!\cdot\!\p\!\big)$ is isometric, the involution $^*$ in $\big(\AA^\infty,\p\!\cdot\!\p^M_{\sf h}\!\big)$ is also isometric and it extends to an isometric involution on $\mathfrak L^M_{\sf h}$.
\item
If $\,\p\!\cdot\!\p$ is sub-multiplicative with respect to $\diamond$\,, then $\p\!\cdot\!\p^M_{\sf h}$ is sub-multiplicative with respect to $\#$\,. The completion $\mathfrak L^M_{\sf h}$ becomes a Banach algebra sent isometrically by $M_{\sf h}$ into the Banach algebra $\mathfrak L$\,.
\item
If $\,\p\!\cdot\!\p$ is a $C^*$-norm, then $\p\!\cdot\!\p^M_{\sf h}$ is also a $C^*$-norm and $\mathfrak L^M_{\sf h}$ is a $C^*$-algebra, which can be identified with a $C^*$-subalgebra of $\mathfrak L$\,.
\end{enumerate}
\end{proposition}

\begin{proof}
This follows easily from the fact that $M_{\sf h}$ is a $^*$-monomorphism, cf. Corollary \ref{aragorn}. Let us check the second item, for instance:
$$
\p\!f\#g\!\p^M_{\sf h}=\,\p\!M_{\sf h}(f\#g)\!\p\,=\,\p\!M_{\sf h}(f)\diamond M_{\sf h}(g)\!\p\,\le\,\p\!M_{\sf h}(f)\!\p\p\!M_{\sf h}(g)\!\p\,=\,\p\!f\!\p^M_{\sf h}\p\!g\!\p^M_{\sf h}.
$$
\end{proof}

\begin{example}\label{sauron}
{\rm For any $p\in[1,\infty)$ one can consider the Banach space $\mathfrak L:=L^p(\Xi;\A)$ with norm
$$
\p\!F\!\p\,:=\Big(\int_\Xi \p\!F(X)\!\p^p_\A dX\Big)^{1/p}\,.
$$
Among the generalized modulation spaces $\big[L^p(\Xi;\A)\big]^M_{\sf h}$, those with $p=1$ and ${\sf h}\,\sharp\,{\sf h}={\sf h}=\overline{\sf h}$ are Banach $^*$-algebras.
}
\end{example}

We treat now the problem of the ${\sf h}$-dependence of the Banach space $\mathfrak L^M_{\sf h}$. We say that the norm $\p\!\cdot\!\p$ on $\S(\Xi;\A^\infty)$ is {\it admissible} (and call the completion $\mathfrak L$ {\it an admissible Banach space}) if for any ${\sf h},{\sf k}\in\S(\Xi)\!\setminus\!\{0\}$ the operator 
$$
R_{\sf k,h}:=M_{\sf k}\widetilde M_{\sf h}:\big(\S(\Xi;\A^\infty),\p\!\cdot\!\p\!\big)\to\big(\S(\Xi;\A^\infty),\p\!\cdot\!\p\!\big)
$$ 
is bounded. 

\begin{proposition}\label{fagorn}
If for a fixed couple $({\sf h},{\sf k})$ the operator $R_{\sf k,h}$ is bounded, we get a continuous dense embedding $\mathfrak L^M_{\sf h}\to\mathfrak L^M_{\sf k}$. So, if $\mathfrak L$ is admissible, all the Banach spaces $\big\{\mathfrak L^M_{\sf h}\mid{\sf h}\in\S(\Xi)\!\setminus\!\{0\}\big\}$ are isomorphic.
\end{proposition}

\begin{proof}
It is enough to show that for some positive constant $C({\sf h},{\sf k})$ one has $\p\!f\!\p^M_{\sf k}\,\le C({\sf h},{\sf k})\!\p\!f\!\p^M_{\sf h}$ for all $f\in\S (\Xi; \A^\infty)$\,. This follows from the assumption and from \eqref{invform}:
$$
\p\!f\!\p^M_{\sf k}\,=\,\p\!M_{\sf k}f\!\p\,=\,\frac{1}{\p\!{\sf h}\!\p_\Xi^2}\p\!M_{\sf k}\big(\widetilde M_{\sf h}M_{\sf h}f\big)\!\p\,\le\frac{\p\!M_{\sf k}\widetilde M_{\sf h}\!\p}{\p\!{\sf h}\!\p_\Xi^2}\p\!M_{\sf h}f\!\p\,=\frac{\p\!R_{\sf k,h}\!\p}{\p\!{\sf h}\!\p_\Xi^2}\p\!f\!\p^M_{\sf h}.
$$
\end{proof}

One deduces from Corollary \ref{aragorn} that $\big[\A\rtimes_\Th^\kappa\Xi\big]^M_{\sf h}\!=\AA\,$ for any idempotent window ${\sf h}$\,; in this case the norm is really ${\sf h}$-independent.

\begin{remark}\label{saruman}
{\rm Recall the expression $R_{\sf k,h}:=M_{\sf k}\widetilde M_{\sf h}=M J_{\sf k}\widetilde J_{\sf h}M^{-1}$. Since $M$ and $M^{-1}$ are isomorphisms, the real issue is whether 
$$
J_{\sf k}\widetilde J_{\sf h}:\big(\S(\Xi;\AA^\infty),\p\!\cdot\!\p^M\!\big)\to\big(\S(\Xi;\AA^\infty),\p\!\cdot\!\p^M\!\big)
$$
is bounded or not. If one has a good understanding of the norm $\p\!\cdot\!\p^M$, the verification becomes easier, since  $J_{\sf k}\widetilde J_{\sf h}=I_{{\sf k},{\sf h}}\otimes{\rm id}$\,, where $I_{{\sf k},{\sf h}}$ is just the integral operator with kernel ${\sf k}\otimes\overline{\sf h}$ (a rank one operator).
}
\end{remark}

\section{Representations}\label{glamorama}

We turn now to representations, always supposed to be non-degenerate. The natural Hilbert space realization of a twisted $C^*$-dynamical system $(\A,\Th,\Xi,\kappa)$ is achieved by covariant representations $(r,T,\H)$\,, where $r$ is a representation of $\A$ in the $C^*$-algebra $\mathbb B(\H)$ of all bounded linear operators in $\H$\,, $ T$ is a strongly continuous unitary projective representation in $\H$\,:
\begin{equation}\label{medhros}
T(X)T(Y)=\kappa(Y,X)T(X+Y)\,,\quad\forall\,X,Y\in\Xi\,,
\end{equation}
and for any $Y\in\Xi$ and $g\in\A$ one has
\begin{equation}\label{arwen}
T(Y)r(g)T(-Y)=r\big[\Th_Y(g)\big]\,.
\end{equation}
Hilbert-space representations of the twisted crossed product  (the most general, actually) $\,r\rtimes T :\A\rtimes_\Th^\kappa \Xi\to\mathbb B(\H)$ are associated to covariant representations $(r, T ,\H)$ of $(\A, \Th, \Xi, \kappa)$ by
\begin{equation}\label{gangee}
(r\rtimes T)(G):=\int_\Xi r\big\{\Th_{X/2}[G(X)]\big\}T(X)dX,\quad G\in L^1(\Xi;\A)\,.
\end{equation}
The localized modulation mappings allow us to use in a particular way covariant representations of the initial data in the representation theory of the $R$-quantized $C^*$-algebra $\AA$\,. 

\smallskip
Let $(r, T,\H)$ be a covariant representations for $(\A,\Th,\Xi,\kappa)$ and ${\sf h}\sharp{\sf h} = {\sf h} = {\sf h}\in\S(\Xi)\!\setminus\!\{0\}$ any idempotent window. Composing $r\rtimes T :\A\rtimes_\Th^\kappa \Xi\to\mathbb B(\H)$ with the $^*$-morphism $M_{\sf h} : \AA\to\A\rtimes_\Th^\kappa\Xi$ (cf. Corollary 2.4) one gets the representation
\begin{equation*}\label{frodo}
(r\rtimes T)^M_{\sf h}:=(r\rtimes T)\circ M_{\sf h}:\AA\to\mathbb B(\H)\,,
\end{equation*}
that is given on $\AA^\infty$ by
\begin{equation*}\label{meriadoc}
(r\rtimes T)^M_{\sf h}\!(f)=\int_\Xi\int_\Xi e^{-i[\![X,Y]\!]}{\sf h}(Y)r\big\{\Th_{Y+X/2}(f)\big\}T(X)dXdY.
\end{equation*}
If $r\rtimes T$ is faithful, $(r\rtimes T )^M_{\sf h}$ is faithful too, since $M_{\sf h}$ is injective. Unitary equivalence is preserved under the correspondence $(r, T ) \to (r\rtimes T )^M_{\sf h}$.

\begin{proposition}\label{pippin}
Along the $\mathcal T$-orbits, the representations $(r\rtimes T)^M_{\sf h}$ are unitarily equivalent.
\end{proposition}

\begin{proof}
We recall the notation $(\mathcal T_Z{\sf h})(\cdot) = {\sf h}(\cdot-Z)$ and notice that ${\sf h}$ and $\mathcal T_Z{\sf h}$ are simultaneously self-adjoint projections. 
By \eqref{caf} and \eqref{medhros}, one checks immediately that 
$$
T(X)T(-Z)=e^{-i[\![X,Z]\!]}\,T(-Z)T(X)\,,\quad\forall\,X,Z\in\Xi\,.
$$
Using this identity,  equation \eqref{arwen} and a change of variables, one computes for $f\in\AAi$
$$
\begin{aligned}
T(Z)(r\rtimes T)^M_{\sf h}\!(f)T(-Z)&=\int_\Xi\!\int_\Xi e^{-i[\![X,Y]\!]}{\sf h}(Y)T(Z)r\big\{\Th_{Y+X/2}(f)\big\}T(X)T(-Z)dXdY\\
&=\int_\Xi\!\int_\Xi e^{-i[\![X,Y]\!]}{\sf h}(Y)T(Z)r\big\{\Th_{Y+X/2}(f)\big\}e^{-i[\![X,Z]\!]}T(-Z)T(X)dXdY\\
&=\int_\Xi\!\int_\Xi e^{-i[\![X,Y+Z]\!]}{\sf h}(Y)r\big\{\Th_{Y+Z+X/2}(f)\big\}T(X)dXdY\\
&=\int_\Xi\!\int_\Xi e^{-i[\![X,Y]\!]}{\sf h}(Y-Z)r\big\{\Th_{Y+X/2}(f)\big\}T(X)dXdY.
\end{aligned}
$$
This can be written
\begin{equation*}\label{radagast}
T(Z)(r\rtimes T)^M_{\sf h}\!(f)T(-Z)=(r\rtimes T)^M_{\mathcal T_Z{\sf h}}(f)\,,\quad\forall\,Z\in\Xi\,,\ f\in\AA^\infty,
\end{equation*}
and by density this also holds for $f\in\AA$\,. Thus $(r\rtimes T)^M_{\sf h}$ and $(r\rtimes T)^M_{\mathcal T_Z{\sf h}}(f)$ are unitarily equivalent.
\end{proof}

Actually, starting with an arbitrary representation $\rho:\A\to\mathbb B(\K)$\,, one can induce canonically a covariant representation $\big(r\!_\rho, T,\H)$ of $(\A,\th,\Xi,\kappa)$\,, setting $\H := L^2(\Xi;\mathcal K)$\,,
$$
\big[r\!_\rho(f)\Phi\big](X):=\rho\big[\Th_X(f)\big][\Phi(X)]\,,\quad f\in\A\,,\ X\in\Xi\,,\ \Phi\in L^2(\Xi;\mathcal K)
$$
and
$$
[T(Y)\Phi](X):=\kappa(Y,X)\Phi(X+Y)\,,\quad X,Y\in\Xi\,,\ \Phi\in L^2(\Xi;\mathcal K)\,.
$$
Then one associates the representations $\rho_{(M,{\sf h})} := (r\!_\rho\rtimes T)^M_{\sf h}$ of $\AA$ in $L^2(\Xi;\K)$ indexed by the non-null self- adjoint projections in $(\S(\Xi),\sharp)$\,. It is straightforward to check that the correspondence $\rho\to\rho_{(M,{\sf h})}$ preserves unitary equivalence.

\smallskip
The $C^*$-norm on $\AA$ has been defined in \cite{Rie1} by Hilbert module techniques. The next result supplies alternative formulae

\begin{proposition}\label{legolas}
\begin{enumerate}
\item
For any idempotent real window ${\sf h}\,\sharp\,{\sf h} = {\sf h}\in\S(\Xi)\!\setminus\!\{0\}$ and for each $f \in\AA$ one has
\begin{equation*}
\p\!f\!\p_\AA=\sup\big\{\!\p\!(r\rtimes T)[M_{\sf h}(f)]\!\p_{\mathbb B(\H)}\,\mid (r,T,\H)\ {\rm covariant\ representation\ of}\ (\A,\th,\Xi,\kappa)\big\}\,.
\end{equation*}
\item
Moreover, for any faithful representation $\rho : \A\to\mathbb B(\K)$ one has
\begin{equation*}\label{gimli}
\p\!f\!\p_\AA\,=\,\p\!\rho_{(M,{\sf h})}(f)\!\p_{\mathbb B[L^2(\Xi;\K)]}.
\end{equation*}
\end{enumerate}
\end{proposition}

\begin{proof}
The two formulas follow from the fact that $M_{\sf h} : \AA\to\A\rtimes^\kappa_\Th \Xi$ is an isometry and from the well-known forms of the universal and the reduced norm in twisted crossed products \cite{PR1,PR2}, that coincide since the group $\Xi$ is Abelian, thus amenable.
\end{proof}

\section{The Abelian case}\label{glamodrama}

If $\A$ is Abelian, by Gelfand theory, it is isomorphic (and will be identified) to $C_0(\Si)$\,, the $C^*$-algebra of all complex continuous functions on the locally compact space $\Si$ that converge to zero at infinity. The space $\Si$ is a homeomorphic copy of the Gelfand spectrum of $\A$ and it is compact iff $\A$ is unital. Then the group $\Th$ of automorphisms is induced by an action (also called $\Th$) of $\Xi$ by homeomorphisms of $\Si$\,. We use the convention
$$
[\Th_X(f)](\si) := f [\Th_X(\si)]\,,\quad \forall\,\si\in\Si\,,\, X\in\Xi\,,\, f\in\A\,,
$$
as well as the notation $\Th_X(\si) = \Th(X,\si) = \Th_\si(X)$ for the $X$-transform of the point $\si$.

\smallskip
Assuming that $\A\equiv C_0(\Si)$ is Abelian, we set $\AA =: \CC_0(\Si)$ for the (non-commutative) Rieffel $C^*$-algebra associated to $C_0(\Si)$ by quantization and $\CC_0^\infty(\Si) = C_0^\infty(\Si)$ for the (common) space of smooth vectors under the action $\Th$\,. Of course, this is just a matter of notation: $\Si$ do not possess an intrinsic smooth structure and $\CC_0(\Si)$ is most often non-commutative.

\smallskip
For each $\si\in\Si$\,, we introduce a concrete covariant representation $\(r_\rho,T,L^2(\Xi)\)$ of the twisted dynamical system $\big(C_0(\Si), \Th, \Xi, \kappa\big)$ by
\begin{equation}\label{thorin}
r_\si:C_0(\Si)\to\mathbb B\big[L^2(\Xi)\big]\,,\quad\big[r_\si(g)\big](X):=g\big[\Th_X(\si)\big]\Phi(X)
\end{equation}
and
\begin{equation}\label{bilbo}
T(Y):L^2(\Xi)\to L^2(\Xi)\,,\quad [T(Y)\Phi](X):=\kappa(Y,X)\Phi(X+Y)\,.
\end{equation}
It is induced from the one dimensional representation 
$$
\rho_\si : C_0(\Si)\to\mathbb B(\mathbb C)\cong\mathbb C\,,\quad\rho_\si(f) := f(\si)\,.
$$ 
The general procedure of the Section \ref{glamorama} provides a family of representations $(r_\si\!\rtimes T )^M_{\sf h}$ of $\AA$ in the Hilbert space $L^2(\Xi)$\,, indexed by the non-null projections of $(\S(\Xi),\sharp)$\,.

\smallskip
To connect these representations with rather familiar Weyl-type operators, we need first to recall somehow informally some basic facts about {\it the standard Weyl quantization} $\,{\sf f}\to\mathfrak{Op}({\sf f})$ \cite{Fo}. We assume that $\Xi =\mathscr X\times\mathscr X^*$, with points $X=(x,\xi),Y=(y,\eta),\dots$ The action of $\mathfrak{Op}({\sf f})$ on $\S(\mathscr X)$ or $\H := L^2(\mathscr X)$ (under various assumptions and with various interpretations) is given by
\begin{equation*}\label{gollum}
[\mathfrak{Op}({\sf f})v](x):=\int_\mathscr X\!\int_{\mathscr X^*}\!\!e^{i(x-y)\cdot\xi}\,{\sf f}\Big(\frac{x+y}{2},\xi\Big)v(y)dyd\xi\,.
\end{equation*}
We recall that $\mathfrak{Op}(f\sharp\,g) = \mathfrak{Op}(f)\mathfrak{Op}(g)$ and $\mathfrak{Op}\big(\overline{f}\big) = \mathfrak{Op}(f)^*$. It is useful to introduce the family of unitary operators 
\begin{equation}\label{bombadill}
\mathfrak{op}(X)=\mathfrak{Op}({\sf e}_X)\,,\quad {\sf e}_X(Y):=e^{-i[\![X,Y]\!]}\,,
\end{equation}
satisfying 
$$
\mathfrak{op}(X)\mathfrak{op}(Y)=\kappa(X,Y)\mathfrak{op}(X+Y)\,,\quad\forall\,X,Y\in\Xi\,.
$$

Using these, one gets a family $\{\mathfrak{Op}_\si \mid \si\in\Si\}$ of Schr\"odinger-type representations of the Rieffel $C^*$-algebra $\CC_0(\Si)$ in the Hilbert space $\H = L^2(\mathscr X)$\,, indexed by the points of the space of the dynamical system. They are given for $f\in\CC_0^\infty(\Si)$ by $\mathfrak{Op}_\si(f ) := \mathfrak{Op}[f\circ\Th_\si]$\,; using oscillatory integrals one may write
\begin{equation*}\label{gollumus}
[\mathfrak{Op}(f)u](x):=\int_\mathscr X\!\int_{\mathscr X^*}\!\!e^{i(x-y)\cdot\xi}\,f\Big[\Th_{\big(\frac{x+y}{2},\xi\big)}(\si)\Big]u(y)dyd\xi\,,\quad u\in L^2(\mathscr X)\,.
\end{equation*}

The extension from $\CC_0^\infty(\Si)$ to $\CC_0(\Si)$ is slightly non-trivial, but it is explained in \cite{Ma}. Note that if $\si$ and $\si'$ belong to the same $\Th$-orbit, the representations $\mathfrak{Op}_\si$ and $\mathfrak{Op}_{\si'}$ are unitarily equivalent. $\mathfrak{Op}_\si$ is faithful if and only if the orbit generated by $\si$ is dense. The justifications and extra details can be found in \cite{Ma}.

\begin{remark}\label{elf}
{\rm It is easy to see that $\big(\mathfrak{Op}_\si,\mathfrak{op},L^2(\mathscr X)\big)$ is a covariant representation of $\big(\CC_0(\Si),\Th,\Xi,\kappa\big)$\,. This follows applying $\mathfrak{Op}$ to the relations
$$
{\sf e}_Y\sharp\,{\sf e}_Z=\kappa(Y,Z){\sf e}_{X+Y}\,,\quad{\sf e}_Y\sharp(f\circ\Th_\si)\sharp\,{\sf e}_{-Y}=[\Th_Y(f)]\circ\Th_\si\,.
$$
}
\end{remark}

We would like now to make the connection between the representations $\mathfrak{Op}_\si$ and $(r_\si\rtimes T)^M_{\sf h}$ of the Rieffel algebra $\CC_0(\Si)$ for convenient idempotent windows. This needs some preparations involving the Bargmann transform.

\smallskip
For various types of vectors $u, v : \mathscr X\to\mathbb C$ we define {\it the Wigner transform} ($V$) and {\it the Fourier-Wigner transform} ($W$) by 
$$
W_{u,v}(X) = \<u,\mathfrak{op}(X)v\>_\mathscr X\quad {\rm and}\quad V_{u,v} = \mathcal FW_{u,v}\,.
$$ 
Their important role is shown by the relations
\begin{equation}\label{elrond}
\<u,\mathfrak{Op}({\sf f})v\>_\mathscr X=\int_\Xi{\sf f}(X)V_{u,v}(X)dX,\quad\<u,\mathfrak{Op}({\sf f})v\>_\mathscr X=\int_\Xi(\mathcal F{\sf f})(X)W_{u,v}(X)dX.
\end{equation}

Let us fix $v\in\S(\mathscr X)$ with $\p\!v\!\p_{\mathscr X}\,=\!1$\,. For any $Y\in\Xi$ we define 
$$
v(Y):=\mathfrak{op}(-Y)v\in\H
$$ 
({\it the family of coherent vectors associated to $v$})\,. The isometric mapping $\mathcal U_v : L^2(\mathscr X)\to L^2(\Xi)$ given by
\begin{equation*}\label{dorlomin}
(\mathcal U_v u)(X):=\<v(X),u\>_\mathscr X=\<v,\mathfrak{op}(X)u\>\!_{\mathscr X}=W_{u,v}(X)
\end{equation*}
is called {\it the (generalized) Bargmann transformation corresponding to the family of coherent states} $\{v(X) |X\in\Xi\}$\,. Its adjoint is given by
\begin{equation*}\label{ungoliant}
\mathcal U_v^*\Phi=\int_\Xi\Phi(Y)v(Y)dY\,,\quad\forall\,\Phi\in L^2(\Xi)\,.
\end{equation*}
We also set $\mathbb U_v[T] := \mathcal U_v T\mathcal U_v^*$ for any $T\in\mathbb B[L^2(\mathscr X)]$\,. Now take 
$$
{\sf h}\equiv{\sf h}(v) := V_{v,v}\in\S(\Xi)
$$ 
with explicit form
$$
[{\sf h}(v)](x,\xi)=\int_\mathscr X \!e^{iy\cdot\xi}\,\overline{v\Big(x+\frac{y}{2}\Big)}v\Big(x-\frac{y}{2}\Big)dy\,.
$$
Then ${\sf h}(v)\,\sharp\,{\sf h}(v)=\overline{{\sf h}(v)}={\sf h}(v)$ and $\mathfrak{Op}({\sf h}(v))$ will be the rank-one projection $|v\>\<v|$\,.

\begin{theorem}\label{dwarf}
For any $\si\in\Si$ and any normed vector $v$\,, one has on the $R$-quantization $\CC_0(\Si)\equiv\AA$ of $C_0(\Si)$
\begin{equation*}\label{hobbit}
(r_\si\!\rtimes T)^M_{{\sf h}(v)}=\mathbb U_v\circ\mathfrak{Op}_\si\,.
\end{equation*}
\end{theorem}

\begin{proof}
By density, it is enough to compute on $\CC_0^\infty(\Si)$\,. Using successively the expressions of $\mathcal U_v$\,,\,$\mathfrak{Op}_\si$\,,\,$\mathcal U_v^*$, formulas \eqref{bombadill} and \eqref{elrond} and the fact that ${\sf h}(v)$ is real, we get
$$
\begin{aligned}
\big[\mathcal U_v\mathfrak{Op}_\si(f)\mathcal U_v^*\Phi\big](X)&=\<v(X),\mathfrak{Op}_\si(f)\,\mathcal U^*_v\Phi\>\!_{\mathscr X}\\
&=\Big\<v(X),\mathfrak{Op}(f\circ\Th_\si)\!\int_\Xi\Phi(Y)v(Y)dY\Big\>\!_{\mathscr X}\\
&=\int_\Xi\Phi(Y)\<v(X),\mathfrak{Op}(f\circ\Th_\si)v(Y)\>\!_{\mathscr X}dY\\
&=\int_\Xi\Phi(Y)\<v,\mathfrak{Op}\big({\sf e}_{X}\sharp[f\circ\Th_\si]\sharp{\sf e}_{-Y}\big)v\>\!_{\mathscr X}dY\\
&=\int_\Xi\<{\sf h}(v),{\sf e}_{X}\sharp[f\circ\Th_\si]\sharp{\sf e}_{-Y}\>_{\Xi}\,\Phi(Y)dY.
\end{aligned}
$$
On the other hand, by \eqref{gangee}, \eqref{thorin}, \eqref{bilbo}, a change of variables and the explicit form of $M_{{\sf h}(v)}$
$$
\begin{aligned}
\Big[(r_\si\!\rtimes T)^M_{{\sf h}(v)}(f)\Phi\Big](X)&=\int_\Xi\kappa(Z,X)\big[M_{{\sf h}(v)}(f)\big]\big(\Th_{X+Z/2}(\si),Z\big)\Phi(X+Z)dZ\\
&=\int_\Xi\kappa(Y,X)\big[M_{{\sf h}(v)}(f)\big]\big(\Th_{(X+Y)/2}(\si),Y-X\big)\Phi(Y)dY\\
&=\int_\Xi\kappa(Y,X)\!\int_\Xi e^{-i[\![Y-X,Z]\!]}[{\sf h}(v)](Z)f\big[\Th_{Z+(X+Y)/2}(\si)\big]\Phi(Y)dZdY.
\end{aligned}
$$
Thus it is enough to show that for all $f\in C_0^\infty(\Si)\,,\,{\sf h}=\overline{\sf h}\in\S(\Xi)\,,\,X,Y\in\Xi\,,\,\si\in\Si$ one has
$$
\<{\sf h},{\sf e}_{X}\sharp(f\!\circ\!\Th_\si)\sharp{\sf e}_{-Y}\>_{\Xi}=\kappa(Y,X)\!\int_\Xi e^{-i[\![Y-X,Z]\!]}{\sf h}(Z)f\big[\Th_{Z+(X+Y)/2}(\si)\big]dZ\,.
$$
This amounts to
$$
\big({\sf e}_{X}\sharp(f\!\circ\!\Th_\si)\sharp{\sf e}_{-Y}\big)(Z)=\kappa(Y,X)e^{-i[\![Y-X,Z]\!]}f\big[\Th_\si(Z+(X+Y)/2)\big]\,,\quad\forall\,Z\in\Xi\,,
$$
which follows from a straightforward computation of the left-hand side.
\end{proof}

We discuss shortly "orthogonality matters". On $\Si$ we pick a $\Th$-invariant measure $d\si$ and work with scalar products of the form
$$
\<f,g\>_\Si:=\int_\Si \overline{f(\si)}g(\si) d\si\,,\quad \<F,G\>_{\Xi\times\Si}:=\int_\Xi\int_\Si \overline{F(X,\si)}G(X,\si) dXd\si\,.
$$ 
The relationship between the spaces $\S\big(\Xi;C_0^\infty(\Si)\big)$ and $L^2(\Xi\times\Si)$ depends on the assumptions we impose on $(\Si,d\si)$\,. If $d\si$ is a finite measure, for instance, one has $\S\big(\Xi;C_0^\infty(\Si)\big)\subset L^2(\Xi\times\Si)$\,. Anyhow, the canonical map can be defined independently on $L^2(\Xi\times\Si)$\,.

\begin{proposition}\label{one}
One has {\rm the orthogonality relations} valid for $F,G\in L^2(\Xi\times\Si)$\,:
\begin{equation*}\label{tion}
\<M(F),M(G)\>_{\Xi\times\Si}=\<F,G\>_{\Xi\times\Si}\,.
\end{equation*}
Thus the operator $M:L^2(\Xi\times\Si)\rightarrow L^2(\Xi\times\Si)$ is unitary.
\end{proposition}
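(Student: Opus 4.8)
The plan is to reduce the orthogonality relation to the unitarity of the building blocks of $M$, exactly as in the Fréchet-algebra statement of Proposition~\ref{bezu}. Recall the factorization $M=\mathfrak F\circ C$, where $(\mathfrak F F)(X,\si)=\int_\Xi dY\,e^{-i[\![X,Y]\!]}F(Y,\si)$ is the symplectic partial Fourier transform (in the $\Xi$-variable only, with $\si$ a spectator) and $[C(F)](X,\si)=[\Th_X(F(X))](\si)=F(X,\Th_X(\si))$ in the Abelian realization $\A=\C(\Si)$. So it suffices to check that each of $\mathfrak F$ and $C$ extends to a unitary operator on $L^2(\Xi\times\Si)$, and then compose.

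First I would treat $\mathfrak F$. Since the Lebesgue measure $dY$ was normalized precisely so that $\mathcal F$ is $L^2$-unitary and $\mathfrak F^2=\mathrm{id}$, Plancherel in the $\Xi$-variable gives $\int_\Xi dX\,\overline{(\mathfrak F F)(X,\si)}(\mathfrak F G)(X,\si)=\int_\Xi dY\,\overline{F(Y,\si)}G(Y,\si)$ for each fixed $\si$; integrating over $\si$ with respect to $d\si$ (and invoking Fubini, legitimate for $F,G\in L^2(\Xi\times\Si)$ via a density argument from $\S(\Xi;\A^\infty)$ or from simple tensors) yields $\langle\mathfrak F F,\mathfrak F G\rangle_{\Xi\times\Si}=\langle F,G\rangle_{\Xi\times\Si}$. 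Next I would treat $C$. Here the point is that $C$ acts as a fibered change of variables $(X,\si)\mapsto(X,\Th_X(\si))$; for each fixed $X$ the map $\Th_X$ is a homeomorphism of $\Si$ that preserves $d\si$ by the standing assumption of $\Th$-invariance of the measure, so the substitution $\si\mapsto\Th_X(\si)$ has Jacobian one. Hence $\int_{\Xi\times\Si}dXd\si\,\overline{[C(F)](X,\si)}[C(G)](X,\si)=\int_{\Xi\times\Si}dXd\si\,\overline{F(X,\Th_X(\si))}G(X,\Th_X(\si))=\int_{\Xi\times\Si}dXd\si\,\overline{F(X,\si)}G(X,\si)$, i.e. $C$ is isometric on $L^2(\Xi\times\Si)$; it is onto because $C^{-1}$ is given by the same formula with $\Th_{-X}$ in place of $\Th_X$, so $C$ is unitary. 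Composing, $M=\mathfrak F\circ C$ is a product of unitaries, which proves (\ref{tion}) and the final assertion.

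The main obstacle is not algebraic but measure-theoretic: one must make sure the formal manipulations are justified on all of $L^2(\Xi\times\Si)$, not merely on $\S(\Xi;\A^\infty)$, and in particular that $M$ and $M^{-1}$ genuinely map $L^2(\Xi\times\Si)$ into itself before one may speak of unitarity. I would handle this by first establishing the identities on the dense subspace of finite sums of simple tensors $\sh\otimes f$ with $\sh\in\S(\Xi)$, $f\in\C_c(\Si)$ (or on $\S(\Xi;\A^\infty)$ when $d\si$ is finite), where all integrals converge absolutely and Fubini applies without comment, then extending $\mathfrak F$ and $C$ to bounded operators on $L^2(\Xi\times\Si)$ by the isometry estimates just derived, and finally transporting the identity $\langle MF,MG\rangle_{\Xi\times\Si}=\langle F,G\rangle_{\Xi\times\Si}$ to the whole space by continuity of the inner product. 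The invariance of $d\si$ is used exactly once — in the unitarity of $C$ — and this is the only place the hypothesis on the measure enters.
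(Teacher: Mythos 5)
Your proof is correct and follows the paper's own route: factor $M=\mathfrak F\circ C$, use the normalized Plancherel theorem in the $\Xi$-variable for $\mathfrak F$, and the $\Th$-invariance of $d\si$ to make the fibered change of variables $\si\mapsto\Th_X(\si)$ show that $C$ is unitary. The paper states this in one line; you have merely (and correctly) supplied the density and Fubini details it leaves implicit.
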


\begin{proof}
It is enough to recall the definition $M=\mathfrak F\circ{\rm C}$\,. The (symplectic) partial Fourier transformation $\mathfrak F$ is unitary in $L^2(\Xi\times\Si)\cong L^2(\Xi)\otimes L^2(\Si)$\,. The invertible mapping ${\rm C}$ reads in this case
$$
[{\rm C}(F)](X,\si)=F\big(X,\Th_X(\si)\big)
$$
and is also an isomorphism of $L^2(\Xi\times\Si)$\,, since $d\si$ is $\Th$-invariant.
\end{proof}

In this setting, one can use mixed Lebesgue spaces $L^{p,q}(\Xi\times\Si)$ to induce modulation spaces.

\bigskip
\medskip
{\bf Acknowledgements:}
{M. M\u antoiu has been  supported by {\it Proyecto Fondecyt No.\,1160359} and by {\it N\'ucleo Milenio de F\'isica Matem\'atica RC120002.}
}

\end{document}